\documentclass[a4paper,preprint,english,noend]{article}
\usepackage[T1]{fontenc}
\usepackage[latin9]{inputenc}
\usepackage[affil-it]{authblk}
\usepackage{float}
\usepackage{color}
\usepackage{amsmath}
\usepackage{amssymb}
\usepackage{amsthm}              
\usepackage{algorithm}
\usepackage{algorithmic}
\usepackage{footnote}
\usepackage{diagbox}
\usepackage{hyperref}
\usepackage{tikz}
\usetikzlibrary{decorations,arrows} 
\usetikzlibrary{decorations.pathmorphing} 
\usepgflibrary{decorations.pathreplacing} 
\usetikzlibrary{mindmap,trees}
\usepackage{pgfpages}
\pgfpagesuselayout{resize to}[a4paper]
\usepackage{verbatim}
\usepackage{dcolumn}
\usepackage{enumitem}
\usepackage{url}

\newtheorem{thm}{Theorem}[section]

\newtheorem{defi}[thm]{Definition}

\newtheorem{rem}[thm]{Remark}
\newtheorem{convention}[thm]{Convention}
\newtheorem{notation}[thm]{Notation}

\newtheorem*{acknow}{Acknowledgments}

\definecolor{commentcolor}{rgb}{0.6, 0.6, 0.6}
\definecolor{myblued}{HTML}{34468c}
\definecolor{mybluel}{HTML}{697Fd4}
\definecolor{myredd}{HTML}{c31313}
\definecolor{myredl}{HTML}{c36767}
\definecolor{mygreend}{HTML}{2ca92c}
\definecolor{mygreenl}{HTML}{79a979}
\definecolor{myyellowd}{HTML}{e27722}
\definecolor{myyellowl}{HTML}{e2a97c}

\makeatother


\usepackage{babel}

\if@Mn@Math@
 \DeclareMathSymbol{\ell}{\mathlord}{letters}{'140}
\fi

\DeclareSymbolFont {mysymbols}{OMS}{cmsy}{m}{n} 
\DeclareMathSymbol{\calI}{\mathalpha}{mysymbols}{`I}
\DeclareSymbolFont {mylargesymbols}{OMX}{cmex}{m}{n}

\DeclareMathOperator{\lm}{lm}
\DeclareMathOperator{\lt}{lt}
\DeclareMathOperator{\lc}{lc}

\DeclareMathOperator{\sdeg}{s-deg}
\DeclareMathOperator{\sigdeg}{sig-deg}

\DeclareMathOperator{\lcm}{lcm}

\DeclareMathOperator{\idx}{index}

\DeclareMathOperator{\poly}{poly}

\DeclareMathOperator{\ap}{AP}
\DeclareMathOperator{\ggv}{G2V}

\DeclareMathOperator{\gvw}{GVW}
\DeclareMathOperator{\gvwhs}{GVWHS}

\DeclareMathOperator{\ff}{F5}
\DeclareMathOperator{\rs}{SB}

\DeclareMathOperator{\ffc}{F5C}

\DeclareMathOperator{\sba}{\textsc{Sba}}

\DeclareMathOperator{\fglm}{\textsc{FGLM}}

\DeclareMathOperator{\pot}{\prec_{pot}}
\DeclareMathOperator{\sch}{\prec_{s}}


\newcommand*{\openRight}{\csname .openrighttrue\endcsname}


\newcommand\rp{\ensuremath{\mathcal{R}}}

\newcommand\sigvar{e}
\newcommand\siglt{\ensuremath{\prec}}
\newcommand\sigle{\ensuremath{\preceq}}
\newcommand\siggt{\ensuremath{\succ}}

\newcommand\monset{\ensuremath{\mathcal{M}}}

\newcommand\lpset{\ensuremath{\mathcal{L}}}

\newcommand\spol[2]{\ensuremath{\mathcal{S}(#1,#2)}}
\newcommand\sig{\ensuremath{\mathrm{sig}}}

\newcommand\field{\ensuremath{\mathcal{K}}}

\newcommand\pring{\ensuremath{\mathcal{K}[x_1,\ldots,x_n]}}


\definecolor{commentcolor}{rgb}{0.6, 0.6, 0.6}
\definecolor{newcolor}{RGB}{218,22,20}

\definecolor{1}{RGB}{16,78,139}
\definecolor{2}{RGB}{0,114,49}
\definecolor{3}{RGB}{96,139,0}
\definecolor{4}{RGB}{255,100,0}
\definecolor{5}{RGB}{255,49,0}
\definecolor{6}{RGB}{125,0,0}

\begin{document}
\nocite{zobnin2010}
\nocite{panhuwang2012}
\nocite{singular314}
\nocite{gpSingularBook2007}
\nocite{bcr2011}
\nocite{blsr1999}
\nocite{fglmFGLM1993}
\nocite{wichmannFGLM1997}
\nocite{sw2010}
\nocite{sw2011}
\nocite{arsPhd2005}
\nocite{ahExtF52010}
\nocite{ckmConvertingBases1997}
\nocite{apF5CritRevised2011}
\nocite{galkinTermination2012}
\nocite{galkinSimple2012}
\nocite{gvwGVW2010}
\nocite{huang2010}
\nocite{bardetPhD}
\nocite{bardetComplexity2002}
\nocite{ACFP12}
\nocite{BFS04}
\nocite{BFS05}
\nocite{volnyGVW2011}
\nocite{fF419999}
\nocite{bardetPhD}
\nocite{apF452010}
\nocite{fF41999}


\title{An analysis of inhomogeneous signature-based Gr\"obner basis
  computations}

\date{\empty}

\author{Christian Eder}
\affil{\small{INRIA, Paris-Rocquencourt Center, PolSys Project\\
UPMC, Univ. Paris 06, LIP6\\
CNRS, UMR 7606, LIP6\\
UFR Ing\'enierie 919, LIP6\\
Case 169, 4, Place Jussieu, F-75252 Paris\\ \vspace*{2mm} Christian.Eder@inria.fr}}

\maketitle
\begin{abstract}
In this paper we give an insight into the behaviour of
signature-based Gr\"obner basis algorithms, like $\ff$, $\ggv$ or $\rs$, 
for inhomogeneous input. On the one hand, it seems that the restriction to
sig-safe reductions puts a penalty on the performance. 
The lost connection between polynomial degree and signature degree
can disallow lots of reductions and can lead to an overhead in the computations.
On the other hand, the way critical pairs are sorted and corresponding
s-polynomials are handled in signature-based algorithms is a very efficient one,
strongly connected to sorting w.r.t. the
well-known sugar degree of polynomials.

\end{abstract}

\section{Introduction}

Gr\"obner bases are a fundamental tool in computer
algebra. In 1965 Buchberger introduced a first algorithmic attempt for their computation, see~\cite{bGroebner1965}. 

In \cite{fF52002Corrected} Faug\`ere introduced the $\ff$ Algorithm which uses the concept of {\it signatures}
to detect zero reductions efficiently during the computation of Gr\"obner bases. 
In the last couple of years, several variants and
optimizations in the class of signature-based algorithms,
for example, $\ffc$ (\cite{epF5C2009}), $\ggv$ (\cite{ggvGGV2010}) or $\rs$
(\cite{rounestillman2012}) have been developed. 
Whereas the above mentioned publications focus mainly on the area of optimizing
signature-based criteria for detecting useless critical pairs, a close look at
the overall behaviour of signature-based computations in general is still
missing. Here we want to fill this gap and discuss advantages and disadvantages
of the signature-based attempt. Without going into detail about efficient
implementations we analyze the underlying characteristics all signature-based
algorithms share:
\begin{enumerate}
\item Sorting critical pairs by increasing signatures, and
\item processing only so-called sig-safe reduction steps. 
\end{enumerate}

By doing this we clear up myth that signature-based algorithms are only
applicable for homogeneous input data, but that they are not useful (either in the sense of
being incorrect or in the sense of being under-performing) in the
inhomogeneous setting. 

In Section~\ref{sec:Background} we introduce the basic setting for
signature-based Gr\"obner basis algorithms. There we unify the fundamental
framework for such algorithms, describing the differences to a
pure polynomial approach. 
Making some smaller changes to the initial
presentation of $\ff$ in Section~\ref{sec:ff}, Gr\"obner bases for inhomogeneous
input can be computed, too. Even more, it turns out that with
this new description understanding the algorithm's inner workings is much
easier. Following this, we give for the first time a discussion about the 
strong connections between the sorting of critical pairs by increasing signatures 
and the corresponding sorting by the so-called sugar
degree. The sugar degree, introduced in \cite{gmnrtSugarCube1991} and 
further discussed in \cite{bcr2011}, is known to be a powerful tool optimizing 
pure polynomial Gr\"obner basis computations in the inhomogeneous setting. Thus, 
explaining its relation to the signature-based world, we are able to allow a 
first estimate for the usefulness of those kind of algorithms beyond the homogeneous 
case. Even more, in Section~\ref{sec:problems} we test the differences in the behaviour 
of sig-safe reductions comparing $4$ different implementations of signature-based 
algorithms for a wide range of examples side-by-side in the respective inhomogeneous
and homogenized version. It turns out that, when compared to pure polynomial
attempts, there is no built-in disadvantage relying on signatures for computing 
Gr\"obner bases of inhomogeneous input.

The main contribution of this paper is to give a deeper insight into the inner
workings of signature-based Gr\"obner basis algorithms with a view towards 
optimizing the order in which critical pairs are handled. Besides this, we give 
first ideas for good heuristics to decide when to use which variant in order
to benefit from a better performance.

\section{Basic setting}
\label{sec:Background}
Let $n\in\mathbb{N}$, $\field$ a field, and
$\rp=\pring$. 
Furthermore, we denote the monoid of all monomials in $x_1,\dotsc,x_n$ by
$\monset := \left\{ \prod_{i=1}^n x_i^{\alpha_i} \mid (\alpha_1,\dotsc,\alpha_n)
\in \mathbb{N}^n\right\}$. We mostly use the shorthand notation $x^\alpha :=
\prod_{i=1}^n x_i^{\alpha_i}$. A polynomial $p \in \rp$ is a finite $\field$-linear
combination of monomials in $\rp$,
$p = \sum_{\alpha \in U} c_\alpha x^\alpha,
  c_\alpha \in \field$, $U$ a finite subset of $\mathbb{N}^n$.
We define the {\bf degree of a polynomial}
$p \neq 0$\footnote{For the zero polynomial we set $\deg(0) = - 1$.} by 
$\deg(p) = \max\left\{ \sum_{i=1}^n \alpha_i \mid c_{\alpha} \neq
0\right\}$.
We say that a polynomial $p$ is {\bf homogeneous}, if all its monomials have the
same degree; otherwise we call $p$ {\bf inhomogeneous}.

Let $F=\left(f_{1},\ldots,f_{m}\right)$,
where each $f_{i}\in \rp$, and $I=\langle F\rangle \subset \rp$ is 
the ideal generated by the elements of $F$. 

Moreover, fixing a well-ordering $<$ on $\monset$ we get a unique 
representation of the elements in $\rp$:
For a polynomial $p\in \rp$, we denote 
$p$'s {\bf leading monomial} by $\lm\left(p\right)$,
its {\bf leading coefficient} by $\lc\left(p\right)$, and 
write $\lt\left(p\right)=\lc\left(p\right)\lm\left(p\right)$ for its {\bf leading
term}.
In particular, a well-ordering $<$ preferring the degree over any other 
criterion to sort elements is denoted {\bf degree compatible ordering}.

Let $\sigvar_{1},\ldots,\sigvar_{m}$ be the canonical generators
of the free $\rp$-module $\rp^{m}$. We define a map
\begin{center}
$
\begin{array}{rccc}
\nu : &\rp^{m} &\rightarrow &I\\
    &\sum_{i=1}^{m} p_i\sigvar_i &\mapsto &\sum_{i=1}^m p_if_i,
\end{array}
$
\end{center}
$p_i \in \rp$ for all $1 \leq i \leq m$. 
Thus we extend the ordering $<$ to an admissible ordering $\siglt$ on the set
$\monset' := \left\{ t \sigvar_i \mid~t\in \monset, 1\leq i \leq m\right\}$. Without any restriction, 
the reader can think of the following two choices for $\siglt$ in the following:
\begin{enumerate}
\item preferring the module position over the term  $\pot$: \\
$t_i\sigvar_{i}\siglt t_j\sigvar_{j}$
iff $i<j$, or $i=j$ and $t_i < t_j$.
\item Being induced by $<$, the Schreyer ordering $\sch$: \\
$t_i\sigvar_{i}\siglt t_j\sigvar_{j}$
iff $t_i \lm\left(\nu(\sigvar_{i})\right)<t_j \lm\left(\nu(\sigvar_{j})\right)$,
or $t_i \lm\left(\nu(\sigvar_{i})\right) = t_j \lm\left(\nu(\sigvar_{j})\right)$ and $i < j$.
\end{enumerate}
In \cite{gvwGVW2010} it is shown that the above orderings are the most efficient 
ones for sig\-na\-ture-\-based Gr\"obner basis computations. The author has 
made similar experiences in various tests of his implementations, see
Section~\ref{sec:problems} for more details.

Most of the considerations in this paper are independent of the chosen
extended ordering, thus we use the notation $\siglt$ and specify to $\pot$
respectively $\sch$ whenever differences appear. The notions of leading
monomial, leading term, and leading coefficient generalize naturally to $\rp^m$
w.r.t. $\siglt$ on $\monset'$. Additionally, for $0 \in \rp^m$ we define $\lm(0) = \lt(0) =
0$.


\begin{notation}
For an easier description in the following let us agree on
the notation $\lpset := \monset' \times I$.
\end{notation}

\begin{defi}
\label{def:lpoly}
Let $p$ be a polynomial in $I$.
\begin{enumerate}
\item Let $h = \sum_{i=1}^m h_i e_i \in \rp^m$ be such that
$p = \nu(h)$. 
We say that $\lm(h) \in \monset'$ is {\bf a signature} of $p$.
Moreover, considering a well-ordering 
$\siglt$ on $\monset'$ there exists for each $p\in\rp$ a unique, minimal
signature.
\item An element $f = (t\sigvar_i,p) \in \lpset$ is called a {\bf labeled
  polynomial}, if $t \sigvar_i$ is a signature of $p$. For a labeled polynomial $f=(t\sigvar_i,p)$ we define the shorthand
  notations $\poly(f) = p$, $\sig(f) = t\sigvar_i$, and $\idx(f) = i$. Talking about the leading
  monomial, leading term, leading coefficient, degree, and least common
  multiples of $f \in \lpset$ we
  always assume the corresponding value of $\poly(f)$. Furthermore, if $G =
  \{g_1,\dotsc,g_{\ell}\} \subset \lpset$, then we define
  $\poly(G) := \left\{\poly(g_1),\dotsc,\poly(g_{\ell})\right\} \subset I$.
\item Let $f \in \lpset$, let $t \in \monset$, and let $c \in \field$. We define a
multiplication of $f$ by $ct$ via
$ctf := \left(t \sig(f),ct \poly(f)\right) \in \lpset$.
\item A {\bf critical pair} of two labeled polynomials $f$ and $g$ is a tuple
$(f,g) \in \lpset^2$. $\deg(f,g) := \deg\left(\lcm\left(\lm(f),\lm(g)\right)\right)$ 
defines the {\bf degree of a critical pair}.
Moreover, we define the
{\bf s-polynomial} of two labeled polynomials $f$ and $g$ in $\lpset$ by 
\[\spol{f}{g}=\left(\omega,\lc(g)u_f \poly(f)-\lc(f)u_g
\poly(g)\right)\]
where $\omega = \lm\left(
u_f\sig(f) - u_g\sig(g) \right)$ and $u_h
=\frac{\lcm\left(\lm(f),\lm(g)\right)}{\lm\left(h\right)}$ for $h \in \{f,g\}$. 
$\spol f g$ is called {\bf non-minimal} if $\omega \siglt 
\max\left\{u_f\sig(f),u_g\sig(g) \right\}$.
\item We define the {\bf signature degree} of a labeled polynomial $f = (te_i,p)\in \lpset$
by
\[ \sigdeg(f) := \deg(t) + \deg\left(\nu(e_i)\right).\]
Moreover, in the following it makes sense to speak about the signature degree
of a critical pair: $\sigdeg(f,g) := \sigdeg\left( \spol f g \right)$.
\end{enumerate}
\end{defi}

Next we extend the notions of reduction and standard representation from the
pure polynomial setting to the signature-based one:

\begin{defi}
\label{def:standard}
Let $f,g \in \lpset$ be two labeled polynomials. Moreover, let $G\subset \lpset$.
\begin{enumerate}
\item We say that {\bf $f$ reduces sig-safe to $g$ modulo $G$} if there exist
$r_0=f,\dotsc,r_k=g \in \lpset$ such that for all $i \in \{1,\dotsc,k\}$
there exist $g_{j_i} \in G$, $t_{i} \in\monset$ and $c_{i} \in\field$
fulfilling
\begin{enumerate}
\item $r_{i}=r_{i-1}-c_{i}t_{i}g_{j_{i}}$,
\item $\lm\left(r_{i}\right)<\lm\left(r_{i-1}\right)$, and
\item $t_i\sig\left(g_{j_{i}}\right)\siglt\sig\left(r_{i-1}\right)$.
\end{enumerate}
\item $f$ has a {\bf standard representation with respect to $G$} if there
exist $h_{1},\ldots,h_{k}\in \rp$, $g_1,\dotsc,g_{k} \in G$  such that
$\poly(f)=\sum_{i=1}^k h_{i}\poly(g_{i})$, and for each $i \in \{1,\ldots,k\}$
either $h_{i}=0$, or
\begin{enumerate}
\item $\lm\left(h_{i}\right)\lm\left(g_{i}\right)\leq\lm\left(f\right)$, and 
\item $\lm\left(h_{i}\right)\sig(g_{i})\sigle \sig(f)$. 
\end{enumerate}
\item If there exists $h\in G$ such that $\sig(h) \mid \sig(f)$ and $\lm(h) \mid 
\lm(f)$, then we say that {\bf $f$ is sig-redundant to $G$}.
\end{enumerate}
\end{defi}

Clearly, if $f$ reduces sig-safe to $0$ modulo $G$, then it has a standard representation
w.r.t. $G$.

The restriction of the reducer $g_{j_{i}}$ by 
$t_i\sig\left(g_{j_{i}}\right)\siglt\sig\left(r_{i-1}\right)$ in
each step of a sig-safe reduction is essential for the correctness of signature-based algorithms.  
If a labeled polynomial $f$ has a standard representation w.r.t. $G$, then $\poly(f)$ has a 
standard representation w.r.t. $\poly(G)$\footnote{Due to signature restrictions the inverse does not
necessarily hold.}.
Thus we can give a statement similar 
to Buchberger's Criterion, see~\cite{bGroebner1965}, for the signature-based
setting.

\begin{thm}
\label{thm:gb} 
Let $G = \left\{g_1,\dotsc,g_{k}\right\}
\subset \lpset$ such that $\left\{f_1,\dotsc,f_{m}\right\}
\subset \poly(G)$. If for each pair $(i,j)$ with $i>j$, $1\leq i,j
\leq k$, either
\begin{enumerate}
\item $\spol {g_i}{g_j}$ is non-minimal, or
\item $\spol{g_i}{g_j}$ has a standard representation w.r.t. $G$,
\end{enumerate}
  then $\poly(G)$ is a Gr\"obner basis of $I$.
\end{thm}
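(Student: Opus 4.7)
The strategy is to deduce Theorem~\ref{thm:gb} from the classical Buchberger criterion. Since $\{f_1,\dotsc,f_m\}\subset\poly(G)$, the set $\poly(G)$ generates $I$, so it suffices to show that for every pair $(i,j)$ with $i>j$ the classical s-polynomial $\spoly\bigl(\poly(g_i),\poly(g_j)\bigr)$ admits a polynomial standard representation with respect to $\poly(G)$. A direct unfolding of Definition~\ref{def:lpoly} shows that $\poly\bigl(\spol{g_i}{g_j}\bigr)$ differs from $\spoly\bigl(\poly(g_i),\poly(g_j)\bigr)$ only by an overall nonzero scalar in $\field$, so I only need to produce a standard representation of $\poly\bigl(\spol{g_i}{g_j}\bigr)$ with respect to $\poly(G)$ in each of the two cases.

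In case~(2), $\spol{g_i}{g_j}$ has a sig-standard representation with respect to $G$. The observation immediately following Definition~\ref{def:standard} — that a sig-standard representation in $\lpset$ projects under $\poly$ to a polynomial standard representation in $\rp$ — then yields what is required.

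Case~(1), non-minimality, is the heart of the proof. By definition $\omega\siglt\max\{u_{g_i}\sig(g_i),u_{g_j}\sig(g_j)\}$, which can only occur if the leading module-terms of the canonical lift of the s-polynomial cancel. I would proceed by well-founded induction on $\tau:=\max\{u_{g_i}\sig(g_i),u_{g_j}\sig(g_j)\}$ in the well-ordering $\siglt$, assuming inductively that every element of $I$ with a lift $h\in\rp^m$ satisfying $\lm(h)\siglt\tau$ admits a sig-standard representation with respect to $G$. The cancellation produced by non-minimality delivers exactly such a lift $h$ with $\nu(h)=\poly\bigl(\spol{g_i}{g_j}\bigr)$ and $\lm(h)=\omega\siglt\tau$. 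Splitting $h$ into its leading labeled piece and a strictly smaller tail and applying the induction hypothesis to each piece, the individual standard representations can be combined into one for $\poly\bigl(\spol{g_i}{g_j}\bigr)$.

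The main obstacle is making the inductive step of case~(1) airtight. A clean formulation probably requires strengthening the inductive claim from just the finitely many s-polynomials of $G$ to arbitrary labeled polynomials of bounded signature, so that the hypothesis can be applied to the leading piece of $h$ — which need not itself be an s-polynomial of two elements of $G$. Ensuring the argument is uniform across the two extensions $\pot$ and $\sch$ singled out in the paper, whose precise notion of "leading-signature cancellation" differs, is the remaining technical point to watch.
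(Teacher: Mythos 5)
Your overall strategy --- reduce to Buchberger's criterion, dispatch case~(2) by the projection remark following Definition~\ref{def:standard}, and handle case~(1) by a well-founded induction on signatures with a strengthened hypothesis --- is the route taken in the references the paper cites for this theorem (the paper itself gives no proof, only the citation). But there is a genuine gap in your case~(1). Non-minimality of $\spol{g_i}{g_j}$ means $u_{g_i}\sig(g_i)=u_{g_j}\sig(g_j)$ as \emph{monomials} in $\monset'$. The canonical lift of the s-polynomial is $\lc(g_j)u_{g_i}h_i-\lc(g_i)u_{g_j}h_j$, where $h_i,h_j\in\rp^m$ realize the signatures; its two leading module coefficients are $\lc(g_j)\lc(h_i)$ and $\lc(g_i)\lc(h_j)$, and these need not be equal. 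The scalars in the s-polynomial are chosen to cancel the leading \emph{polynomial} terms, not the leading \emph{module} terms, so this lift can perfectly well still have leading monomial $\tau=u_{g_i}\sig(g_i)$ rather than something $\siglt\tau$. The element that genuinely admits a lift of signature $\siglt\tau$ is $\lc(h_j)u_{g_i}\poly(g_i)-\lc(h_i)u_{g_j}\poly(g_j)$, a \emph{different} linear combination; expressing $\poly\bigl(\spol{g_i}{g_j}\bigr)$ through it forces you to add back a multiple of $u_{g_i}\poly(g_i)$, whose leading monomial is the full $\lcm\left(\lm(g_i),\lm(g_j)\right)$ --- exactly the term a standard representation must avoid. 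So case~(1) cannot be closed by ``find a lower-signature lift of the s-polynomial and apply the induction hypothesis.''

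What is actually needed there is the content of the singular/non-minimal criterion as established in the cited works: once everything of signature $\siglt\tau$ is under control, any two elements carrying the same signature $\tau$ agree up to a nonzero scalar and contributions of strictly lower signature, so only one representative per signature must be accounted for, and the non-minimal pair is absorbed into whatever does receive a standard representation at that signature. Relatedly, your strengthened induction hypothesis --- every element of $I$ with a lift of signature $\siglt\tau$ has a standard representation w.r.t.\ $G$ --- is essentially the whole theorem, and you never sketch its own inductive step: how an arbitrary lift $h$ is first turned into a representation over $G$ respecting the signature bound (this is where $\left\{f_1,\dotsc,f_m\right\}\subset\poly(G)$ and, implicitly, the presence in $G$ of elements with signatures $\sigvar_i$ enter), and why the descent on the maximal leading monomial of that representation terminates. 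Until those two points are supplied, the argument is not complete.
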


\begin{proof}
See, for example, ~\cite{epF5C2009,epSig2011}.
\end{proof}

\begin{rem}
It is well-konwn that
non-minimal elements are useless for the resulting Gr\"obner basis as well as
for the intermediate computations in signature-based algorithms. We refer to 
\cite{epSig2011} for more details on this fact.
\end{rem}

Next we present a generic signature-based Gr\"obner basis algorithm lying an 
emphasis on the general ideas behind signature-based computations.
Proofs of correctness and termination of Algorithm~\ref{alg:sba} can be
found in~\cite{epSig2011}, Theorem~14.

\begin{algorithm}[h]
\caption{\label{alg:sba} Generic signature-based Gr\"obner basis algorithm
w.r.t.  $<$ ($\sba$)}
\begin{algorithmic}[1]

\REQUIRE{$F=(f_1,\dotsc,f_m)$ a finite sequence of elements in $\rp$}
\ENSURE{$\poly(G)$ a Gr\"obner basis for $\langle F \rangle$ w.r.t. $<$}
\STATE{$G \gets \emptyset$, $P\gets \emptyset$}
\FOR{$(i=1,\dotsc,m)$}
\STATE{$g_i \gets (\sigvar_i,f_i)$}
\STATE{$G \gets G \cup \{g_i\}$}
\STATE{$P \gets P \cup \left\{ ( {g_i}, {g_j}) \mid g_i,g_j \in G,
  j<i\right\}$}\label{alg:pairs}
\ENDFOR
\WHILE{$(P \neq \emptyset)$}
\STATE{Let $(f,g )\in P$ such that $\spol f g$ has minimal signature w.r.t.
  $\siglt$.}\label{alg:minchoice}
\STATE{$P \gets P \backslash \left\{(f, g)\right\}$}\label{alg:minchoice2}
\STATE{Reduce $\spol f g$ sig-safe to $r$.}
\IF{$(\poly(r) \neq 0 \text{ and } r \text{ is not sig-redundant to }G)$}
\STATE{$P \gets P \cup \left\{(r, h) \mid h \in G, \spol r h \textrm{ not
  non-minimal}\right\}$}
\STATE{$ G \gets G \cup \{ r \}$}
\ENDIF
\ENDWHILE
\RETURN{$\poly(G)$}
\end{algorithmic}
\end{algorithm}

As in the pure polynomial setting a Gr\"obner basis algorithm without
any criteria to detect not necessary computations in advance, like Algorithm~\ref{alg:sba}
represents, is not efficient. In the signature-based world there exist
two main criteria to detect useless critical pairs:
\begin{enumerate}
\item the {\bf non-minimal signature criterion}, based on already
known syzygies: It checks if the leading monomial of a syzygy
divides the signatures of a critical pair;
\item the {\bf rewritable signature criterion}, based on the fact that for any signature
only one polynomial needs to be computed. 
\end{enumerate}

The most efficient implementations of signature-based Gr\"obner basis 
algorithms nowadays are

\begin{enumerate}
\item Faug\`ere's $\ff$ Algorithm (\cite{fF52002Corrected}) and optimizations
(\cite{epF5C2009,egpF52011,epSig2011}), 
\item Gao, Guan and Volny's $\ggv$ Algorithm (\cite{ggvGGV2010}),
\item Gao, Volny and Wang's $\gvw$ Algorithm (\cite{gvwGVW2010,volnyGVW2011}),
  and
\item Arri and Perry's algorithm (\cite{apF5CritRevised2011}) respectively Roune and
Stillman's optimized version $\rs$
(\cite{rounestillman2012}).
\end{enumerate}

The first two mainly differ in their usage and implementation of the above 
mentioned signature-based criteria to detect 
useless critical pairs. They share $\pot$ as ordering used on the 
signatures which leads to an incremental (w.r.t. the input sequence $F$) computation of Gr\"obner bases.

However $\rs$ as well as $\gvw$ are capable of using 
different orderings on the signatures. In
\cite{gvwGVW2010} the authors show that the Schreyer ordering $\sch$ turns out
to be the most efficient one for a wide range of example classes. Due to the
fact that $\sch$ does not favour the position of the module element over the
corresponding term a non-incremental computation is achieved.

For the focus of this paper we are neither interested in the specific
variants these criteria can be implemented nor in a comparison of those in terms
of efficiency or timings. It is enough to keep in mind that both criteria are
based on the signatures of labeled polynomials considered
during the algorithm's workings. Here we focus on the connection
of purely polynomial data to the signatures.

\begin{rem}
If we assume $\siglt\; = \pot$, then Algorithm~\ref{alg:sba} computes a
Gr\"obner basis of $\langle F \rangle$ incrementally, storing the
critical pairs of higher index in $P$, but prolonging their reduction until
all elements of lower index have been processed.

If there exist several critical pairs in $P$ of the same signature in
Line~\ref{alg:minchoice}, choose the one that entered $P$ first.
\end{rem}

\begin{convention}
In the following we often speak about s-polynomials in $P$ meaning the s-polynomial 
of a corresponding critical pair in $P$. Moreover, for any Gr\"obner basis algorithm we assume $F$ as input.
\end{convention}

Investigating the algorithms' behaviour for inhomogeneous input
data we can focus mainly on the handling of a single s-polynomial: 
Generate an s-polynomial and compute a sig-safe reduction step of that s-polynomial.
Thus we need not specify $\siglt$ in the following. 

\section{Problems of inhomogeneous signature-based computations depending on
  pure polynomial data}
\label{sec:ff}
The $\ff$ Algorithm as presented in \cite{fF52002Corrected} is restricted to homogeneous
input data. None of its successors, like $\ggv$ or $\rs$ have this
restriction. So what is the decisive factor here? Signature-based Gr\"obner basis 
computations, in particular the efficiency of the signature-based criteria rely
on the fact that s-polynomials are handled by increasing signature.

$\ff$, as presented in \cite{fF52002Corrected} chooses s-polynomials differently
from Algorithm~\ref{alg:sba}: Instead of picking the next s-polynomial from $P$ 
w.r.t. minimal signature (Line~\ref{alg:minchoice}), 
$\ff$ uses a presorting of $P$ by the degree of the corresponding s-po\-ly\-no\-mials.
To mimic this one needs to change Algorithm~\ref{alg:sba} beginning in
Line~\ref{alg:minchoice}:

\begin{algorithm}[h]
\caption{\label{alg:ff1} Presorting changes for $\ff$}
\begin{algorithmic}[1]
\STATE{$\dotsc$}
\STATE{$d \gets \min\left\{\deg(f, g) \mid (f, g) \in P\right\}$}
\STATE{$Q \gets \left\{ (f, g) \mid \deg(f, g ) =
  d\right\}$}\label{alg:presort}
\STATE{$P \gets P \backslash Q$}
\WHILE{$(Q \neq \emptyset)$}
\STATE{Let $(f, g) \in Q$ such that $\spol f g$ has a minimal signature w.r.t.
  $\siglt$.}\label{alg:minchoicef5}
\STATE{$Q \gets Q \backslash \left\{(f, g)\right\}$}
\STATE{$\dotsc$}
\ENDWHILE
\end{algorithmic}
\end{algorithm}

Changing Algorithm~\ref{alg:sba} as explained above is not enough 
to ensure the correctness of the resulting algorithm. 
Replacing the corresponding parts of Algorithm~\ref{alg:sba} with the pseudo code 
of Algorithm~\ref{alg:ff1} we need to distinguish where
newly generated critical pairs are stored. This postsorting is explained in
Algorithm~\ref{alg:ff2}.

\begin{algorithm}[h]
\caption{\label{alg:ff2} Postsorting changes for $\ff$}
\begin{algorithmic}[1]
\STATE{$\dotsc$}
\IF{$(\poly(r) \neq 0 \text{ and } r \text{ is not sig-redundant to }G)$}
\STATE{$P \gets P \cup \left\{(r, h) \mid h \in G, \spol r h \textrm{ not
  non-minimal}, \deg(r,h )>d\right\}$}
\STATE{$Q \gets Q \cup \left\{(r, h) \mid h \in G, \spol r h \textrm{ not
  non-minimal}, \deg(r, h )=d\right\}$}\label{alg:sigunsafe}
\STATE{$ G \gets G \cup \{ r \}$}
\ENDIF
\STATE{$\dotsc$}
\end{algorithmic}
\end{algorithm}

Note that $\deg(\spol r h )=d$ in Line~\ref{alg:sigunsafe} of
Algorithm~\ref{alg:ff2} is possible due to the restriction to sig-safe
reductions. $\spol r h$ then corresponds to a previously not handled, not
sig-safe reduction step of $r$.

\begin{rem}
In \cite{fF52002Corrected} a reduction with an element
of higher signature is solved in a slightly different way: 
Once noticed, the corresponding s-polynomial of higher
signature is generated. It is clear that for homogeneous input
this s-polynomial has the same degree as the other
elements already in $Q$. Thus it is directly added to $Q$, sorted in by
increasing signature. 
See \cite{epSig2011} for more information on this.
\end{rem}

Computing a Gr\"obner basis for 
an inhomogeneous ideal with $\ff$ the idea of homogenization can be 
used: One homogenizes the elements of $F$ w.r.t. some new variable,
call this $F^{\textrm{h}}$. Then a Gr\"obner basis $G^\textrm{h}$ for
$\langle F^\textrm{h} \rangle$ is computed w.r.t. a monomial ordering for which
the homogenization variable is smaller than all the other ones. 
Then one can receive a Gr\"obner 
basis $G$ for $\langle F \rangle$ from $G^\textrm{h}$.

This attempt has the advantage to compute step-by-step
intermediate Gr\"obner bases up to a given degree $d$, the degree of 
generated s-polynomials never drops.
Thus all possible reducers are available when they are needed.
In our ongoing discussion of $\ff$ this means that it is impossible 
that an element in $P$ will later on transform to a
new labeled polynomial in $G$ that could be useful for a reduction of an element
currently in $Q$.
On the other hand, the problem of this approach is that computing a Gr\"obner basis for 
$\langle F^\textrm{h} \rangle$ can be much harder than the computations for the
initial problem by adding solutions at infinity.

Efficiency of signature-based algorithms is based on handling critical pairs by increasing 
signatures. So in order to understand $\ff$'s restriction to
homogeneous input we need to answer the following two questions:
\begin{enumerate}
\item Does $\ff$ compute new elements by increasing signatures throughout the 
algorithm's working assuming homogeneous input?
\item If so, does this property get lost when applying $\ff$ to inhomogeneous
input?
\end{enumerate}
To answer these questions we need to find a connection between the degree of 
a labeled polynomial, that means, the degree of the polynomial part of it, and its 
signature.

Looking at the homogeneous situation first,
constructing s-polynomials has nice properties: Let $f$ and $g \in \lpset$
such that $\poly(f)$ as well as $\poly(g)$ are homogeneous. Computing corresponding 
multipliers $u$ and $v$ such that $u\lt(f) = v\lt(g)$ we can construct their 
s-polynomial $\spol f g$. Clearly, 
$u\poly(f)$ and $v\poly(g)$ are homogeneous, too.  It follows that 
\[\deg\left(\spol f g \right) = \deg\left(u\lt(f)\right) = \deg\left(v\lt(g)
    \right) = \deg(f,g).\]
For inhomogeneous $\poly(f)$ and $\poly(g)$, the situation is different as, for
example, $\deg\left(u\left(f-\lt(f)\right)\right)$ might be smaller than
$\deg\left( uf\right)$. So building the s-polynomial of $f$ and $g$ a drop for
the polynomial degree can happen:
\[\deg\left(\spol f g \right) \leq \deg\left(u\lt(f)\right) = \deg\left(v\lt(g)
    \right)  = \deg(f,g).\]
Next, let us see how $\ff$ handles the signatures and the coresponding degrees:
Input elements of $F$ are initialized to labeled polynomials 
$g_i = (e_i,f_i)$, by definition it holds that
\[\sigdeg\left( g_i \right) = \deg\left( g_i \right),\]
regardless of whether $\poly(g_i)$ is homogeneous or not.
Generating an s-polynomial of two elements $f$ and $g$ a drop in the degree
of the corresponding signature could only happen in the following situations:
\begin{enumerate}
\item $\sig(uf) = \sig(vg)$, which would mean that $\spol f g$ is non-minimal,
  and thus it would not be computed in $\ff$.
\item Once $\spol f g$ is built, the signature drops due to some ongoing
reduction of the polynomial part. This would not be a sig-safe reduction at this
degree step as well as at any upcoming higher degree. Thus such a reduction is not
processed.
\end{enumerate}
It follows that
\[\sigdeg(f,g) = \sigdeg\left(\spol f g\right) \geq \deg\left(\spol f
    g\right).\]
Thus new labeled polynomials $h$ are added to $G$ for which $\sigdeg(h) \geq
\deg(h)$ holds. Generating new critical pairs and s-polynomials from
this point on we come to the following relation for arbitrary $f$ and $g$ in $G$
computed by $\ff$ (or any other signature-based algorithm related to
    Algorithm~\ref{alg:sba}):
\begin{equation}
\label{eq:relation}
\sigdeg(f,g) = \sigdeg\left(\spol f g\right) \geq \deg(f,g) \geq \deg\left(\spol f
    g\right).
\end{equation}
Assuming homogeneous input to the algorithm Relation~\ref{eq:relation} becomes an equation.

So we conclude this discussion with the following facts:
\begin{enumerate}
\item In a signature-based Gr\"obner basis algorithm with homogeneous input the
degree of the critical pair (respectively the corresponding s-poylnomial) and
its signature degree coincide.
Therefore it is useless to presort the pair set $P$ by increasing degrees of 
the s-polynomials and later on sort $Q$ by increasing signatures: 
The signature of an s-polynomial in $Q$ is always smaller than the signature of an
element in $P$.
\item In the inhomogeneous situation the equality  between the degree and the
signature degree of a labeled polynomial need not hold any longer. Thus
presorting critical pairs by the polynomial degree can have bad influence on the
signature-based algorithms' inner working: A processing of the elements by
increasing signature can no longer be guaranteed.

Think about the following quite likely situation:\footnote{
For example, in {\tt Eco-11} such a situation happens 
hundreds of times.}
Let $(f,g)$ and $(f',g')$ be two critical pairs in $P$, and let $u,v$
respectively $u',v' \in \rp$ be the multipliers for 
$\spol f g$ respectively $\spol {f'}{g'}$. Assume that $u\sig(f) 
\siggt v\sig(g)$ and $u'\sig(f')\siggt v'\sig(g')$.  Moreover, 
assume that $\deg\left( uf \right) < \deg\left( u'f' \right)$. In $\ff$,
once all critical pairs of degree smaller than $\deg\left( uf \right)$ 
have been processed, $(f,g)$ is added to $Q$, whereas $(f',g')$ stays 
in $P$ and its further computation is postponed to a later point. In the 
situation of inhomogeneous polynomial data it is possible 
that $u\sig(f) \siggt u'\sig(f')$ as we just have seen.
Thus an element of higher signature is computed {\it before} an 
element of lower signature. 

\end{enumerate}

The main problem is that efficiency of signature-based algorithms which use
variants of the non-minimal signature criterion and the
rewritable signature criterion together with sig-safe reductions are based on this fact. 
So $\ff$'s presorting of critical pairs by their polynomial degree can lead to
way less efficient computations if the input data is inhomogeneous.

Moreover, we have seen that for homogeneous input $\ff$'s presorting
of critical pairs is useless and does not change anything w.r.t. the 
order in which the algorithm handles its critical pairs: Those are still processed by
increasing signatures. 
On the other hand, exactly this presorting interfers $\ff$
once it comes to inhomogeneous input data. 

Thus signature-based Gr\"obner basis algorithms should always be implemented without a
purely polynomial degree preselection in order to achieve a better efficiency.

\begin{convention}
In the following we can assume $\ff$ without polynomial degree
preselection, hence as a variant of Algorithm~\ref{alg:sba}.
\end{convention}

\begin{rem}
Due to the equality between the polynomial degree and the signature degree
of a labeled polynomial in the homogeneous situation, signature-based algorithms
are designed to handle Gr\"obner basis computations very well in this setting:
Discarding efficiently useless critical pairs and sorting them by polynomial degree is, in 
general, the best possible selection strategy in this case.
\end{rem}

\section{The connection between the signature degree and the sugar degree}
\label{sec:sugar}
In the last section we have seen that in the homogeneous
case signature-based algorithms handle critical pairs in an optimal order. 
Naturally, the question about the algorithms' usefulness in the inhomogeneous comes 
to one's mind.

In \cite{gmnrtSugarCube1991} Giovini, Mora, Niesi, Robbiano, and Traverso
introduce the notion of the {\em sugar degree} of a polynomial.  Later 
on, Bigatti, Caboara, and Robbiano describe the idea of a self-saturating 
variant of Buchberger's algorithm in \cite{bcr2011}. There, an in-depth 
discussion on the theoretical background of the sugar degree is given.
The idea behind this kind of degree is to improve a Gr\"obner basis 
computation for inhomogeneous input by giving it the flavour of a homogeneous 
one. Note that there exist other concepts for optimizations in the 
inhomogeneous setting, see for example~\cite{ufnarovski2008} or the idea of 
self-saturation given in \cite{bcr2011}\footnote{See also 
Section~\ref{sec:conclusion}.}. Still, the approach using the sugar degree is so far 
the most popular one due to its simple implementation. 

\begin{defi}
Let $F$ be the input of a purely polynomial Gr\"obner basis algorithm computing
$G$, let $f$ and $g$ be two elements in $G$, and let $t \in \monset$.
The {\bf sugar degree} is defined in the following way:
\begin{enumerate}
\item $\sdeg(f_i) := \deg(f_i)$ for all $i \in \{1,\dotsc,m\}$,
\item $\sdeg(tf) := \deg(t) + \sdeg(f)$, and 
\item $\sdeg(f+g) := \max\left\{\sdeg(f),\sdeg(g)\right\}$.
\end{enumerate}
For a critical pair $(f,g)$ we define the sugar degree by the sugar degree
of the corresponding s-polynomial, $\sdeg(f,g) := \sdeg\left( \spol f g \right)$.
\end{defi}

Clearly, if $F$ consists of homogeneous polynomials the sugar degree
coincides with degree throughout the Gr\"obner basis computation. When computing
with inhomogeneous data, the sugar degree becomes a useful tool: It  mimics the 
degree the elements would have, if the input sequence would have
been homogenized before starting the computations. Thus using the sugar degree the following
threepartite sorting of critical pairs emerges to be very efficient in a wide class 
of example sets tested (see \cite{gmnrtSugarCube1991} for more information on this):

\begin{enumerate}
\item increasing sugar degree,
\item increasing degree,
\item increasing w.r.t. $<$.
\end{enumerate}

Critical pairs are then sorted as in the homogeneous situation without the
overhead of homogenizing at all. Also this sorting needs not to be
optimal, it has a positive influence on the efficiency of Gr\"obner
basis algorithms in general.

Next we discuss how sorting critical pairs by increasing signatures is related 
to the ``sugared'' ordering.

\begin{thm}
\label{thm:sigdegsugardeg}
Let $f$ be a labeled polynomial appearing during a 
signature-based Gr\"obner basis computation, then $\sigdeg(f) = \sdeg\left( \poly(f) \right)$.
\end{thm}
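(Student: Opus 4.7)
The plan is to argue by structural induction on the computation history of $f$ within Algorithm~\ref{alg:sba}: every labeled polynomial appearing during the run is produced either as (i) an initial element $g_i = (\sigvar_i, f_i)$, (ii) a scalar-monomial multiple $ctg$ of a previously computed element $g$, (iii) a minimal s-polynomial $\spol{f'}{g'}$, or (iv) the result of one sig-safe reduction step applied to a previously computed element. I would check the identity $\sigdeg(f) = \sdeg(\poly(f))$ in each of these four cases, propagating the induction hypothesis.

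The base case is immediate: for $g_i = (\sigvar_i,f_i)$ both sides equal $\deg(f_i)$, directly from the two definitions. The monomial multiplication case is equally direct, since both $\sigdeg$ and $\sdeg$ are additive in $\deg(t)$: applying the hypothesis to $g$ yields $\sigdeg(ctg) = \deg(t) + \sigdeg(g) = \deg(t) + \sdeg(\poly(g)) = \sdeg(\poly(ctg))$. For a sig-safe reduction $r \mapsto r' = r - ctg$, the defining condition $t\sig(g) \prec \sig(r)$ forces $\sig(r') = \sig(r)$, hence $\sigdeg(r') = \sigdeg(r) = \sdeg(\poly(r))$ by the hypothesis on $r$; meanwhile the sugar rule for sums gives $\sdeg(\poly(r')) = \max\{\sdeg(\poly(r)), \sdeg(ct\poly(g))\}$. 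So the remaining task in this case is to show $\sdeg(ct\poly(g)) \leq \sdeg(\poly(r))$, which by the hypothesis applied to $g$ and $r$ amounts to $\sigdeg(tg) \leq \sigdeg(r)$.

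For an s-polynomial $\spol{f}{g}$, the algorithm discards all non-minimal pairs, so $\omega = \max_\prec\{u_f\sig(f), u_g\sig(g)\}$. Taking WLOG $\omega = u_f\sig(f)$, the signature-degree side reads $\sigdeg(\spol{f}{g}) = \deg(u_f) + \sigdeg(f)$, while the sugar-degree side reads $\max\{\sdeg(u_f \poly(f)), \sdeg(u_g \poly(g))\}$; applying the hypothesis to $f$ and $g$ again reduces matters to showing $\sigdeg(u_f f) \geq \sigdeg(u_g g)$. Both the reduction case and the s-polynomial case therefore collapse to the same crucial step: converting a signature comparison $s \prec s'$ into the degree comparison $\deg(s) \leq \deg(s')$, where I extend $\deg$ to the module-monomials in $\monset$ by setting $\deg(t\sigvar_i) := \deg(t) + \deg(\nu(\sigvar_i))$ so that $\sigdeg(h) = \deg(\sig(h))$. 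This compatibility between $\prec$ and the module-monomial degree is the heart of the argument and the main obstacle; it is automatic for the Schreyer ordering $\sch$ induced by a degree-compatible $<$, since there $\sch$ compares extended leading terms $t_i\lm(f_i)$ and hence respects polynomial degree, and I expect the remainder of the proof to fall out from this observation combined with the case analysis above.
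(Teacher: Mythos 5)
Your skeleton is essentially the paper's: induct over the ways a labeled polynomial is created during the run (initial generators, monomial multiples, s-polynomials, sig-safe reduction steps) and check that $\sigdeg$ and $\sdeg$ transform identically in each case. The paper's own proof is a terser version of exactly this --- it records the base case, the additivity $\sigdeg(tf)=\deg(t)+\sigdeg(f)$, and the identity $\sigdeg(\spol{f}{g})=\sigdeg(uf)$ for a not non-minimal pair with $\sig(uf)\siggt\sig(vg)$, and then declares the conclusion. You go further in two useful ways: you treat the reduction case explicitly (the paper omits it, although the intermediate results $r_i$ of a sig-safe reduction are certainly labeled polynomials ``appearing during the computation''), and you actually write down the sugar side $\max\left\{\sdeg(u_f\poly(f)),\sdeg(u_g\poly(g))\right\}$ rather than only the signature side.

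That extra care exposes the one step that neither you nor the paper closes: both your s-polynomial case and your reduction case reduce to the implication that $s\siglt s'$ in $\monset'$ forces the corresponding signature degrees to compare the same way. You rightly observe that this holds for $\sch$ induced by a degree-compatible $<$; but it is false for $\pot$ in general (a signature of higher module index is $\pot$-greater regardless of its degree), and the paper itself concedes this in the discussion immediately following the theorem. So do not expect the remainder ``to fall out'': as stated, the argument needs the explicit hypothesis that $\siglt$ is compatible with signature degree, and you should state it at the point where you convert the signature comparison into a degree comparison. With that hypothesis made explicit, your proof is complete and is, if anything, a more honest and more fully cased version of the one in the paper.
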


\begin{proof}
For each $f_i \in F$ it holds that the initial labeled polynomial $g_i= 
(e_i,f_i)$ fulfills that $\sigdeg(g_i) = \deg(g_i) = \deg(f_i) = \sdeg(f_i)$.

Let $f$ and $g$ be two labeled polynomials in a signature-based Gr\"obner
basis algorithm. For $t \in \monset$ it holds that $\sigdeg(tf) = \deg(t) + 
\sigdeg(f)$.

Let $u$ and $v$ be multipliers in $\rp$ such that $u\lt(f) = v\lt(g)$.
Note that we can 
assume $\spol f g$ to be not non-minimal. W.l.o.g. let $\sig(uf)
\siggt \sig(vg)$. Then it holds that 
$\sigdeg\left( f,g \right) = \sigdeg\left( \spol f g \right) = \sigdeg(uf)$.

Thus the signature degree of a labeled polynomial $f$ in a signature-based Gr\"obner
basis algorithm coincides with the sugar degree of the corresponding polynomial
part $\poly(f)$.
\end{proof}

At this point of our discussion we need to distinguish possible choices for
$\prec$ on the signatures. Let us have a closer look at different 
situations assuming a degree compatible monomial ordering:
\begin{enumerate}
\item Using $\sch$ on the signatures, results in a non-incremental 
signature-based algorithm choosing critical pairs by increasing sugar degree.  
This is based on the fact that
\[ \sig(f) \sch \sig(g) \Longleftrightarrow \sigdeg(f) < \sigdeg(g).\]
\item Choosing $\pot$ the situation gets more complicated:
The algorithm prefers the signatures of higher module position. Assuming a 
critical pair being generated by two elements with signatures of different 
module positions it is not clear that the signature of higher module position
also has a higher degree. One can see that ordering by increasing sugar degree
in such a setting even breaks the incremental structure of the algorithm,
for example in {\tt Cyclic-5}.
\end{enumerate}
If a not degree compatible monomial ordering $<$ is given
then sorting critical pairs by increasing signature need not lead to a pair set 
sorted by increasing sugar degree. Even if we use $\sch$ on the signatures we 
cannot guarantee such a behaviour.

As yet, relaxing the restriction of sorting critical pairs 
by increasing signatures does not make
any sense. For example, ordering critical pairs by sugar degree signature-based
computations slow down by a large factor. 
The strengths of signature-based criteria detecting useless critical pairs and
sig-safe reductions are based on ordering by increasing signatures.
Disrespecting this fact a less efficient algorithm results. 
The signature ordering  $\siglt$ has to be preferred towards the polynomial
ordering $<$.

Nevertheless we can conclude that signature-based Gr\"obner basis algorithms choose by 
default a good selection strategy if a degree compatible monomial ordering is given, 
whether or not the input data is homogeneous. It coincides with the
sugar degree strategy if $\sch$ is used on the signatures.

\section{Still, there is a sour taste left}
\label{sec:problems}
The discovery of the last section seems to be compatible with published 
experimental results, for example, see~\cite{epSig2011}: Sometimes signature-based 
algorithms have problems computing Gr\"obner bases of inhomogeneous ideals, for example
computing {\tt Eco-11} is 9 times slower than computing {\tt Eco-11-h} in our
implementation of $\ff$ w.r.t. the graded reverse-lexicographical ordering.
For {\tt Eco-11} switching from $\pot$ to $\sch$ can 
improve timings (see Table~\ref{tab:timings}), still such an approach does not
always work. 
As we have seen, the selection strategy is efficient in the given example.
Also the signature-based criteria detecting useless critical pairs work good in the 
inhomogeneous setting, discarding a lot more elements than a Gebauer--M\"oller 
implementation.
So the only situation where problems can occur is the reduction process.

Since we lose the connection $\deg(f) = \sigdeg(f)$
for all labeled polynomials $f$ computed in signature-based algorithms for 
inhomogeneous input, forcing the reduction to be sig-safe 
can have a bad impact on the algorithms' behaviour.

In the homogeneous case the signature of the multiplied reducer can only be greater due
to either its signature index (considering $\pot$) or lexicographic
considerations (depending on the underlying monomial ordering $<$). It always
holds that $\sigdeg(f) = \sigdeg(tg)$
for $tg$ being a reducer of $f$, possibly sig-safe. Assuming the input to be inhomogeneous 
it is even possible that $\sigdeg(tg) \succ \sigdeg(f)$. The number of not sig-safe
reductions could increase compared to the homogeneous setting. This again means 
that a bunch of new critical 
pairs is generated, decreasing the algorithms' efficiency.
Assume in the above setting that the reduction itself is not allowed 
as $t\sig(g) \succ \sig(f)$. So in the following 
a new critical pair $(g,f)$ with signature $t\sig(g)$ is generated and later on 
computed. The problem is the ``later on'': Whereas $(g,f)$ has the same 
signature degree as $f$ in the homogeneous setting, assuming polynomials to be 
inhomogeneous it is possible that 
\[\sigdeg(g,f) = \sigdeg(tg) > \sigdeg(f).\]
This means that the corresponding data needed from the reduction step of $f$ and
$tg$ cannot be used in the algorithm at the time it really is needed. This triggers other 
reductions that would be helpful to take place at an earlier point of the 
algorithm to be delayed. 
Correctness is still ensured, the corresponding reduction steps needed for 
a Gr\"obner basis computation are executed nevertheless later on. As we are
assuming an underlying well-ordering on the polynomials the delay due to
introducing useless data has to be overruled after finitely many computational 
steps.\footnote{For more details on termination of signature-based algorithms we refer to
\cite{egpF52011}.}
Notwithstanding, the overhead that is computed due to these postponed reduction 
steps has a penalty on the performance of signature-based Gr\"obner basis 
algorithms.

We have tested a wide class of benchmarks from~\cite{posso}
and~\cite{graebeSymbolicData}, covering different admissible orderings,
finite and infinite ground fields, including parameters, etc. We have
implemented four different variants of signature-based Gr\"obner basis
algorithms ($\sba$) in the computer algebra system {\sc Singular}.
Since the implementations are still experimental, undergoing further development,
they are currently not part of the stable {\sc Singular} repository. Still, they
are publicly available in the branch {\tt sba}\footnote{In this paper we used
  commit {\tt 
  291021c19066befbcdd8a7d7626e5ddc2d421db4}.} at
\begin{center}
\url{https://github.com/ederc/Sources}.
\end{center}

In order not to overcharge the reader, we present in this 
paper results only for a part of our test suite. Those benchmarks represent the 
algorithms' overall behaviour quite accurately. The reader interested in the 
complete data set can get it at
\begin{center}
\url{https://github.com/ederc/benchmarks}.
\end{center}

The 4 variants of $\sba$ are tested for homogeneous and for inhomogeneous
input each, thus we must represent 8 different values per benchmark. We decided
to visualize our results by colorized bars.
See Table~\ref{tab:sba} for an overview of the implemented variants and the
color scheme chosen.

\begin{savenotes}
\begin{table}[H]
\begin{centering}
    \begin{tabular}{|c||c|c|}
		\hline
    \textbf{Alg $\backslash$ Ord} & $\pot$ & $\sch$\\
		\hline
    \hline
    $\ff$\footnote{presented in \cite{fF52002Corrected}, 
      including the optimizations mentioned in
      \cite{epF5C2009,epSig2011} and Section~\ref{sec:ff}}  & blue & orange\\
    \hline
    $\ap$\footnote{presented in \cite{apF5CritRevised2011},
      including the optimizations mentioned in \cite{epF5C2009,epSig2011}} & green & red\\
    \hline
  \end{tabular}
	\par
\end{centering}
\caption{Implemented signature-based Gr\"obner basis algorithms and color scheme
for Figure~\ref{fig1}}
\label{tab:sba}
\end{table}
\end{savenotes}

Warm colors represent computations done in a non-incremental way, cold ones
stand for incremental variants. In Figure~\ref{fig1} the darker 
variation illustrates results for the
respective inhomogeneous example, whereas the lighter variation stands for
results achieved computing the corresponding homogenized example.


In Figure~\ref{fig1} we give an overview of the behaviour of the 4 variants of
$\sba$ in several different benchmark sets, both homogeneous and inhomogeneous.
We lay our focus on the differences in the reduction process with a
look at the ratio between the number of higher signature detections and the
number of reduction steps in total. We give these ratios in percentage,
represented by the height of the respective bars in the diagrams. With this we 
would
like to get a better feeling for the influence of losing the connection between
$\deg(f)$ and $\sigdeg(f)$ in the inhomogeneous setting.
For an even better estimate we combine the ratios of 
Figure~\ref{fig1} with the timings given in Table~\ref{tab:timings}.

All examples where computed on an 
INTEL\textregistered~XEON\textregistered~X5460~@~3.16GHz processor with 64 GB of 
RAM and 120 GB of swap space running a 
  2.6.31--gentoo--r6 GNU/Linux 64--bit operating system.

Note that all of the examples presented in this paper are 
computed w.r.t. the graded reverse-lexicographical ordering. The complete 
benchmark set available online also includes computations 
w.r.t.  lexicographical orderings. Due to our discussion in 
Section~\ref{sec:sugar} signature-based computations w.r.t. not degree
compatible monomial orderings are rather inefficient in terms of sorting 
critical pairs. In such cases we found that it is more efficient to compute
a Gr\"obner basis w.r.t. the graded 
reverse-lexicographical ordering  and then to use a Gr\"obner 
conversion via $\fglm$ or even a Gr\"obner walk.

\begin{table}
\begin{centering}
    \begin{tabular}{|c|D{.}{.}{3}|D{.}{.}{3}|D{.}{.}{3}|D{.}{.}{3}|}
		\hline
      Test case & \multicolumn{1}{c|}{$\ff$,$\pot$} & 
      \multicolumn{1}{c|}{$\ap$,$\pot$} & \multicolumn{1}{c|}{$\ff$,$\sch$} & 
      \multicolumn{1}{c|}{$\ap$,$\sch$}\\   \hline
		\hline
    {\tt Cyclic-7} & 1.330 & 1.260 & 1.840 & 2.660 \\ \hline
    {\tt Cyclic-7-h} & 1.180 & 1.140 & 1.820 & 2.630\\ \hline
    {\tt Cyclic-8} & 468.260 & 442.870 & 314.970 & 184.900\\ \hline
    {\tt Cyclic-8-h} & 387.890 & 382.230 & 307.000 & 186.780\\ \hline
    {\tt Ext-Cyclic-6} & 157.590 & 129.340 & 13.420 & 16.770\\ \hline
    {\tt Ext-Cyclic-6-h} & 662.380 & 569.880 & 10.260 & 14.090\\ \hline
    {\tt Ilias-12} & 3,447.510 & 458.480 & 639.870 & 283.960\\ \hline
    {\tt Ilias-12-h} & 4,381.080 & 2,240.890 & 553.180 & 239.490\\ \hline
    {\tt Eco-10} & 45.610 & 2.780 & 7.990 & 7.190\\ \hline
    {\tt Eco-10-h} &14.990 & 13.280 & 3.660 & 4.290\\ \hline
    {\tt Eco-11} & 2,398.970 & 29.810 & 163.830 & 125.340 \\ \hline
    {\tt Eco-11-h} & 372.090 & 319.710 & 48.710 & 56.680\\ \hline
    {\tt Red-Eco-11} & 2.620 & 2.430 & 14.530 & 14.580\\ \hline
    {\tt Red-Eco-11-h} & 2.600 & 2.460 & 19.290 & 18.760\\ \hline
    {\tt Red-Eco-12} & 22.010 & 20.330 & 161.530 & 158.430\\ \hline
    {\tt Red-Eco-12-h} & 21.390 & 18.610 & 246.470 & 241.590\\ \hline
    {\tt F-744} & 1.550 & 0.740 & 0.430 & 0.450 \\ \hline
    {\tt F-744-h} & 2.090 & 1.470 & 0.360 & 0.380\\ \hline
    {\tt F-855} & 50.670 & 27.200 & 122.670 & 96.080\\ \hline
    {\tt F-855-h} & 133.470 & 65.980 & 48.600 & 48.930\\ \hline
    {\tt Fabrice-24} & 101.900 & 72.250 & 113.710 & 108.300\\ \hline
    {\tt Fabrice-24-h} & 121.900 & 101.190 & 361.570 & 326.040\\ \hline
    {\tt Katsura-12} & 111.690 & 61.490 & 1,287.250 & 1,303.360\\ \hline
    {\tt Katsura-12-h} & 109.970 & 54.510 & 1,260.380 & 1,223.710\\ \hline
  \end{tabular}
	\par
\end{centering}
\caption{Timings in seconds for the computation of a Gr\"obner basis for the 
  given test case.}
\label{tab:timings}
\end{table}

The results in Figure~\ref{fig1} are rather ambiguous:
 Sometimes the ratio is
several times greater in the inhomogeneous setting than in the corresponding
homogeneous one
(see, for example, {\tt Cyclic-8} and {\tt Ext-Cyclic-6} for $\ff$ and $\ap$
 using $\pot$). Whereas in examples like {\tt Ilias-12} it is just the
other way around.

In various examples the number of sig-safe reduction steps is
a factor of $1000$ greater than the number of higher signature
detections, for example, see {\tt Noon-n} or {\tt Katsura-n}. Not
depending on whether the input is homogeneous or not, the influence of not
sig-safe data is not even measureable in these cases.

Talking about incremental versus non-incremental computations there is an
inclination that the ratio of the number of higher signature detections and the
number of reduction steps done is mostly smaller in the non-incremental setting.
Still one
needs to keep in mind that Figure~\ref{fig1} presents only the ratios: 
For example, in {\tt Katsura-12} the non-incremental variants of $\sba$
are multiple times slower than the incremental ones (see
    Table~\ref{tab:timings}), they do approximately 50 times more reduction
steps. 
Due to this high amount of reductions the ratio gets
lower. Furthermore, finding a heuristic when to prefer incremental computations over
non-incremental ones, for example, see {\tt Katsura-n}, is of great importance.

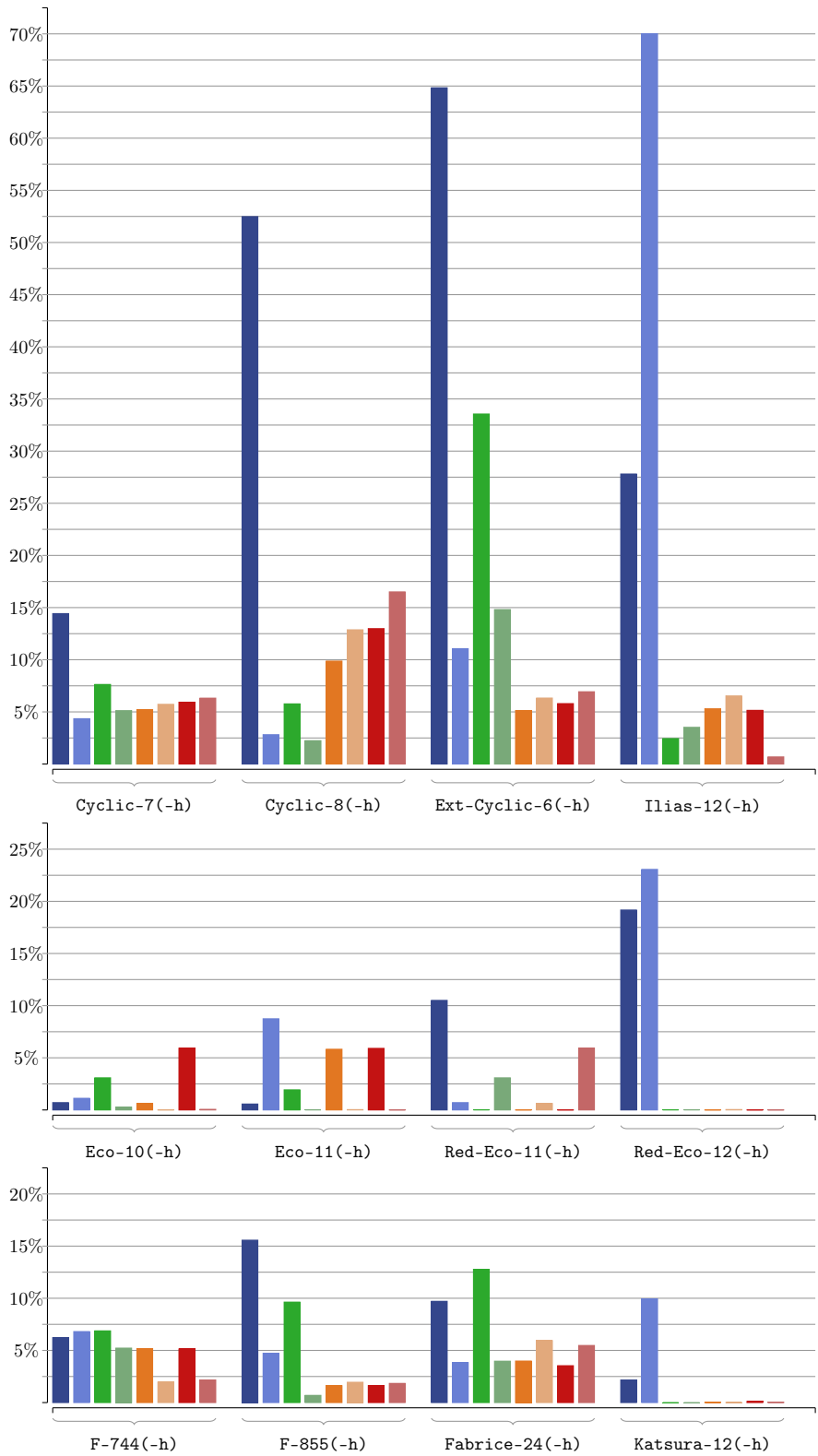
\begin{figure}
\begin{tikzpicture}[scale=.75, transform shape]  
  \draw (0cm,-0.1cm) -- (14.5cm,-0.1cm);  
  \draw (0cm,-0.1cm) -- (0cm,-0.2cm);  
  \draw (14.5cm,-0.1cm) -- (14.5cm,-0.2cm);  
  
  \draw (-0.1cm,0cm) -- (-0.1cm,14.5cm);  
  \draw (-0.1cm,0cm) -- (-0.2cm,0cm);  
  \draw (-0.1cm,14.5cm) -- (-0.2cm,14.5cm);  

  \draw[gray!80, text=black] 
    node at (-0.5 cm,1cm) {$5\%$}  
    node at (-0.5 cm,2cm) {$10\%$}  
    node at (-0.5 cm,3cm) {$15\%$}  
    node at (-0.5 cm,4cm) {$20\%$}  
    node at (-0.5 cm,5cm) {$25\%$}  
    node at (-0.5 cm,6cm) {$30\%$}  
    node at (-0.5 cm,7cm) {$35\%$}  
    node at (-0.5 cm,8cm) {$40\%$}  
    node at (-0.5 cm,9cm) {$45\%$}  
    node at (-0.5 cm,10cm) {$50\%$}  
    node at (-0.5 cm,11cm) {$55\%$}  
    node at (-0.5 cm,12cm) {$60\%$}  
    node at (-0.5 cm,13cm) {$65\%$}  
    node at (-0.5 cm,14cm) {$70\%$};  
  \foreach \a in {1,...,14}  
    \draw[gray!80, text=black] (-0.2 cm,\a cm) -- (14.5 cm,\a cm); \foreach \a 
    in {0.5,1.5,...,13.5}  
    \draw[gray!80, text=black] (-0.2 cm,\a cm) -- (14.5 cm,\a cm); 

  \foreach
  \a/\b\rff/\hff/\tff/\rap/\hap/\tap/\rnff/\hnff/\tnff/\rnap/\hnap/\tnap/\bench in 
    {
      0/0/3.14340/0.45343/1.330/2.20818/0.16870/1.260/4.15226/0.21687/1.840/3.33459/0.19773/2.660/{\tt Cyclic-7},
      0/3.6/15.281111/8.019624/468.260/5.709711/0.329421/540.860/11.988665/1.183386/314.970/6.086544/0.790262/184.900/{\tt
        Cyclic-8},
      0/7.2/10.293657/6.673612/1.330/22.92181/7.69232/1.260/16.46813/0.84469/1.840/13.17033/0.76312/2.660/{\tt
        Ext-Cyclic-6},
      0/10.8/35.972206/10.00305/111.690/78.84253/1.92455/61.490/90.00558/4.77048/1287.250/
        46.45496/2.39221/1303.360/{\tt Ilias-12}
    }
    {
     \draw[color=myblued, fill=myblued] (\b cm,\a cm+0.00cm) rectangle (\b cm +
         0.3 cm,\hff cm / \rff cm * 20 cm + \a cm+0.00cm); 

     \draw[color=mygreend, fill=mygreend] (0.8cm + \b cm,\a cm+0.00cm) rectangle
     (\b cm + 0.8 cm + 0.3cm,\hap cm / \rap cm * 20cm +\a cm+0.00cm); 

     \draw[color=myyellowd, fill=myyellowd] (1.6cm + \b cm,\a cm+0.00cm) rectangle
     (\b cm + 1.6 cm + 0.3cm,\hnff cm / \rnff cm * 20cm +\a cm+0.00cm); 

     \draw[color=myredd, fill=myredd] (2.4cm + \b cm,\a cm+0.00cm) rectangle
     (\b cm + 2.4 cm + 0.3cm,\hnap cm / \rnap cm * 20cm+\a cm+0.00cm); 

    };
  \foreach
  \a/\b\rff/\hff/\tff/\rap/\hap/\tap/\rnff/\hnff/\tnff/\rnap/\hnap/\tnap/\bench in 
    {
      0/0.4/2.85592/0.12418/1.180/2.00889/0.10293/1.140/4.18583/0.23974/1.820/3.35769/0.21218/2.630/{\tt
        Cyclic-7-h},
      0/4/13.151219/0.370661/387.890/4.922021/0.110325/508.190/12.193809/1.569424/307.230/6.253714/1.032245/186.780/{\tt
        Cyclic-8-h},
      0/7.6/15.582246/1.723938/1.330/30.43281/4.50702/1.260/16.55864/1.04793/1.840/13.23665/0.91741/2.660/{\tt
        Ext-Cyclic-6-h},
      0/11.2/11.2496010/7.876296/111.690/35.716155/1.258523/61.490/89.53091/5.8511/1287.250/
        46.50775/0.32033/1303.360/{\tt Ilias-12-h}
    }
    {
     \draw[color=mybluel, fill=mybluel] (\b cm,\a cm+0.00cm) rectangle (\b cm +
         0.3 cm,\hff cm / \rff cm * 20 cm + \a cm+0.00cm); 

     \draw[color=mygreenl, fill=mygreenl] (0.8cm + \b cm,\a cm+0.00cm) rectangle
     (\b cm + 0.8 cm + 0.3cm,\hap cm / \rap cm * 20cm +\a cm+0.00cm); 

     \draw[color=myyellowl, fill=myyellowl] (1.6cm + \b cm,\a cm+0.00cm) rectangle
     (\b cm + 1.6 cm + 0.3cm,\hnff cm / \rnff cm * 20cm +\a cm+0.00cm); 

     \draw[color=myredl, fill=myredl] (2.4cm + \b cm,\a cm+0.00cm) rectangle
     (\b cm + 2.4 cm + 0.3cm,\hnap cm / \rnap cm * 20cm+\a cm+0.00cm); 

    };
    \draw[decorate,decoration={brace,mirror},gray!80] (0cm,-0.3cm) -- (3.1cm,-0.3cm);
    \node[] at (1.55cm,-0.8cm) { {\tt Cyclic-7(-h)} };  
    \draw[decorate,decoration={brace,mirror},gray!80] (3.6cm,-0.3cm) -- (6.7cm,-0.3cm);
    \node[] at (5.15cm,-0.8cm) { {\tt Cyclic-8(-h)} };  
    \draw[decorate,decoration={brace,mirror},gray!80] (7.2cm,-0.3cm) -- (10.3cm,-0.3cm);
    \node[] at (8.75cm,-0.8cm) { {\tt Ext-Cyclic-6(-h)} };  
    \draw[decorate,decoration={brace,mirror},gray!80] (10.8cm,-0.3cm) -- (13.9cm,-0.3cm);
    \node[] at (12.35cm,-0.8cm) { {\tt Ilias-12(-h)} };  

\end{tikzpicture}

\begin{tikzpicture}[scale=.75, transform shape]  
  \draw (0cm,-0.1cm) -- (14.5cm,-0.1cm);  
  \draw (0cm,-0.1cm) -- (0cm,-0.2cm);  
  \draw (14.5cm,-0.1cm) -- (14.5cm,-0.2cm);  
  
  \draw (-0.1cm,0cm) -- (-0.1cm,5.5cm);  
  \draw (-0.1cm,0cm) -- (-0.2cm,0cm);  
  \draw (-0.1cm,5.5cm) -- (-0.2cm,5.5cm);  

  \draw[gray!80, text=black] 
    node at (-0.5 cm,1cm) {$5\%$}  
    node at (-0.5 cm,2cm) {$10\%$}  
    node at (-0.5 cm,3cm) {$15\%$}  
    node at (-0.5 cm,4cm) {$20\%$}  
    node at (-0.5 cm,5cm) {$25\%$};  
  \foreach \a in {1,...,5}  
    \draw[gray!80, text=black] (-0.2 cm,\a cm) -- (14.5 cm,\a cm); 
  \foreach \a in {0.5,1.5,...,4.5}  
    \draw[gray!80, text=black] (-0.2 cm,\a cm) -- (14.5 cm,\a cm); 

  \foreach
  \a/\b\rff/\hff/\tff/\rap/\hap/\tap/\rnff/\hnff/\tnff/\rnap/\hnap/\tnap/\bench in 
    {
      0/0/19.07901/0.13444/45.610/2.16420/0.06671/2.780/6.59802/0.040842/7.990/6.02291/0.35696/7.190/{\tt
        Eco-10},
      0/3.6/187.3065/1.06614/2398.970/11.82142/0.22825/29.810/35.53883/2.06904/0/31.49549/1.85992/125.340/{\tt
        Eco-11},
      0/7.2/16.686/1.753/2398.970/5.492/0/29.810/19.56198/0.00066/0/19.25930/0.00066/125.340/{\tt
        Red-Eco-11},
      0/10.8/10.2764/1.9700/45.610/16.334/0/2.780/9.234571/0.000083/7.990/9.140222/0.000083/7.190/{\tt
        Red-Eco-12}
    }
    {
     \draw[color=myblued, fill=myblued] (\b cm,\a cm+0.00cm) rectangle (\b cm +
         0.3 cm,\hff cm / \rff cm * 20 cm + \a cm+0.00cm); 

     \draw[color=mygreend, fill=mygreend] (0.8cm + \b cm,\a cm+0.00cm) rectangle
     (\b cm + 0.8 cm + 0.3cm,\hap cm / \rap cm * 20cm +\a cm+0.00cm); 

     \draw[color=myyellowd, fill=myyellowd] (1.6cm + \b cm,\a cm+0.00cm) rectangle
     (\b cm + 1.6 cm + 0.3cm,\hnff cm / \rnff cm * 20cm +\a cm+0.00cm); 

     \draw[color=myredd, fill=myredd] (2.4cm + \b cm,\a cm+0.00cm) rectangle
     (\b cm + 2.4 cm + 0.3cm,\hnap cm / \rnap cm * 20cm+\a cm+0.00cm); 

    };
  \foreach
  \a/\b\rff/\hff/\tff/\rap/\hap/\tap/\rnff/\hnff/\tnff/\rnap/\hnap/\tnap/\bench in 
    {
      0/0.4/4.55339/0.05094/14.990/1.85506/0.004997/13.280/5.88594/0.000467/3.660/5.66915/0.00341/4.290/{\tt
        Eco-10-h},
      0/4/17.194/1.503/2398.970/5.492/0/29.810/21.79419/0.00686/206904163.830/19.92199/0.00066/125.340/{\tt
        Eco-11-h},
      0/7.6/1.907901/0.013444/45.610/2.16420/0.06671/2.780/6.59802/0.040842/7.990/6.02291/0.35696/7.190/{\tt
        Red-Eco-11-h},
      0/11.2/7.6739/1.7698/45.610/1.6334/0/2.780/10.663441/0.004157/7.990/9.472116/0.000083/7.190/{\tt
        Red-Eco-12-h}
    }
    {
     \draw[color=mybluel, fill=mybluel] (\b cm,\a cm+0.00cm) rectangle (\b cm +
         0.3 cm,\hff cm / \rff cm * 20 cm + \a cm+0.00cm); 

     \draw[color=mygreenl, fill=mygreenl] (0.8cm + \b cm,\a cm+0.00cm) rectangle
     (\b cm + 0.8 cm + 0.3cm,\hap cm / \rap cm * 20cm +\a cm+0.00cm); 

     \draw[color=myyellowl, fill=myyellowl] (1.6cm + \b cm,\a cm+0.00cm) rectangle
     (\b cm + 1.6 cm + 0.3cm,\hnff cm / \rnff cm * 20cm +\a cm+0.00cm); 

     \draw[color=myredl, fill=myredl] (2.4cm + \b cm,\a cm+0.00cm) rectangle
     (\b cm + 2.4 cm + 0.3cm,\hnap cm / \rnap cm * 20cm+\a cm+0.00cm); 

    };
    \draw[decorate,decoration={brace,mirror},gray!80] (0cm,-0.3cm) -- (3.1cm,-0.3cm);
    \node[] at (1.55cm,-0.8cm) { {\tt Eco-10(-h)} };  
    \draw[decorate,decoration={brace,mirror},gray!80] (3.6cm,-0.3cm) -- (6.7cm,-0.3cm);
    \node[] at (5.15cm,-0.8cm) { {\tt Eco-11(-h)} };  
    \draw[decorate,decoration={brace,mirror},gray!80] (7.2cm,-0.3cm) -- (10.3cm,-0.3cm);
    \node[] at (8.75cm,-0.8cm) { {\tt Red-Eco-11(-h)} };  
    \draw[decorate,decoration={brace,mirror},gray!80] (10.8cm,-0.3cm) -- (13.9cm,-0.3cm);
    \node[] at (12.35cm,-0.8cm) { {\tt Red-Eco-12(-h)} };  

\end{tikzpicture}

\begin{tikzpicture}[scale=.75, transform shape]  
  \draw (0cm,-0.1cm) -- (14.5cm,-0.1cm);  
  \draw (0cm,-0.1cm) -- (0cm,-0.2cm);  
  \draw (14.5cm,-0.1cm) -- (14.5cm,-0.2cm);  
  
  \draw (-0.1cm,0cm) -- (-0.1cm,4.5cm);  
  \draw (-0.1cm,0cm) -- (-0.2cm,0cm);  
  \draw (-0.1cm,4.5cm) -- (-0.2cm,4.5cm);  

  \draw[gray!80, text=black] 
    node at (-0.5 cm,1cm) {$5\%$}  
    node at (-0.5 cm,2cm) {$10\%$}  
    node at (-0.5 cm,3cm) {$15\%$}  
    node at (-0.5 cm,4cm) {$20\%$};  
  \foreach \a in {1,...,4}  
    \draw[gray!80, text=black] (-0.2 cm,\a cm) -- (14.5 cm,\a cm); 
  \foreach \a in {0.5,1.5,...,3.5}  
    \draw[gray!80, text=black] (-0.2 cm,\a cm) -- (14.5 cm,\a cm); 

  \foreach
  \a/\b\rff/\hff/\tff/\rap/\hap/\tap/\rnff/\hnff/\tnff/\rnap/\hnap/\tnap/\bench in 
    {
      0/0/3.32464/0.20695/1.550/1.11161/0.07642/0.740/1.69091/0.08754/0.430/1.54060/0.07918/0.450/{\tt F-744},
      0/3.6/30.97114/4.82161/50.670/8.79282/.84669/27.200/79.27247/1.28238/122.670/59.23830/.96104/96.080/{\tt F-855},
      0/7.2/32.912/3.195/1.550/14.951/1.910/0.740/88.513/3.524/0.430/82.626/2.920/0.450/{\tt
        Fabrice-24},
      0/10.8/19.26302/0.41738/111.690/4.16945/0/61.490/24.545763/0.004588/1287.250/
       23.43906 /0.02696/1303.360/{\tt Katsura-12}
    }
    {
     \draw[color=myblued, fill=myblued] (\b cm,\a cm+0.00cm) rectangle (\b cm +
         0.3 cm,\hff cm / \rff cm * 20 cm + \a cm+0.00cm); 

     \draw[color=mygreend, fill=mygreend] (0.8cm + \b cm,\a cm+0.00cm) rectangle
     (\b cm + 0.8 cm + 0.3cm,\hap cm / \rap cm * 20cm +\a cm+0.00cm); 

     \draw[color=myyellowd, fill=myyellowd] (1.6cm + \b cm,\a cm+0.00cm) rectangle
     (\b cm + 1.6 cm + 0.3cm,\hnff cm / \rnff cm * 20cm +\a cm+0.00cm); 

     \draw[color=myredd, fill=myredd] (2.4cm + \b cm,\a cm+0.00cm) rectangle
     (\b cm + 2.4 cm + 0.3cm,\hnap cm / \rnap cm * 20cm+\a cm+0.00cm); 

    };
  \foreach
  \a/\b\rff/\hff/\tff/\rap/\hap/\tap/\rnff/\hnff/\tnff/\rnap/\hnap/\tnap/\bench in 
    {
      0/0.4/5.95495/0.40529/1.550/3.86739/0.20193/0.740/1.50498/0.02954/0.430/1.40405/0.03039/0.450/{\tt
        F-744-h},
      0/4/99.60257/4.70587/1.550/34.29406/0.23711/0.740/53.43879/1.04124/0.430/42.98564/0.79019/0.450/{\tt
        F-855-h},
      0/7.6/111.405/4.265/1.550/66.967/2.658/0.740/142.792/8.477/0.430/134.157/7.342/0.450/{\tt
        Fabrice-24-h},
      0/11.2/1.802451/0.17921/111.690/3.90042/0/61.490/26.980103/0.0010431/1287.250/
       25.476540/0.008091/1303.360/{\tt Katsura-12}
    }
    {
     \draw[color=mybluel, fill=mybluel] (\b cm,\a cm+0.00cm) rectangle (\b cm +
         0.3 cm,\hff cm / \rff cm * 20 cm + \a cm+0.00cm); 

     \draw[color=mygreenl, fill=mygreenl] (0.8cm + \b cm,\a cm+0.00cm) rectangle
     (\b cm + 0.8 cm + 0.3cm,\hap cm / \rap cm * 20cm +\a cm+0.00cm); 

     \draw[color=myyellowl, fill=myyellowl] (1.6cm + \b cm,\a cm+0.00cm) rectangle
     (\b cm + 1.6 cm + 0.3cm,\hnff cm / \rnff cm * 20cm +\a cm+0.00cm); 

     \draw[color=myredl, fill=myredl] (2.4cm + \b cm,\a cm+0.00cm) rectangle
     (\b cm + 2.4 cm + 0.3cm,\hnap cm / \rnap cm * 20cm+\a cm+0.00cm); 

    };
    \draw[decorate,decoration={brace,mirror},gray!80] (0cm,-0.3cm) -- (3.1cm,-0.3cm);
    \node[] at (1.55cm,-0.8cm) { {\tt F-744(-h)} };  
    \draw[decorate,decoration={brace,mirror},gray!80] (3.6cm,-0.3cm) -- (6.7cm,-0.3cm);
    \node[] at (5.15cm,-0.8cm) { {\tt F-855(-h)} };  
    \draw[decorate,decoration={brace,mirror},gray!80] (7.2cm,-0.3cm) -- (10.3cm,-0.3cm);
    \node[] at (8.75cm,-0.8cm) { {\tt Fabrice-24(-h)} };  
    \draw[decorate,decoration={brace,mirror},gray!80] (10.8cm,-0.3cm) -- (13.9cm,-0.3cm);
    \node[] at (12.35cm,-0.8cm) { {\tt Katsura-12(-h)} };  

\end{tikzpicture}
\caption{Ratio of higher-signature detections to reduction steps for various
  benchmarks}
\label{fig1}
\end{figure}

\begin{rem}\
\begin{enumerate}
\item A discussion on the differences of the implementations of the non-minimal 
signature criterion and the rewritable signature criterion those $4$ algorithms 
use is not in the focus of this paper.  
We refer to the corresponding papers for more details. \cite{epSig2011}
and~\cite{erF5SB2013} give an
overview on how the $4$ variants are related to each other.
Note that combining $\ff$ with $\sch$ does not introduce any theoretical 
problems for correctness of the algorithm.

Note that in various low-level implementations in {\sc Singular} $\ggv$ respectively 
$\gvw$ were not competitive to the 4
signature-based algorithms presented here. The lack of a real implementation of
the rewritable signature criterion seems to be the reason for this, we refer to
\cite{epSig2011}. In~\cite{volnyGVW2011} $\gvwhs$ is presented, a variant of
$\gvw$ using the rewritable criterion of $\ap$. This algorithm as well as the
recently by Roune and Stillman in~\cite{rounestillman2012} presented $\rs$ algorithm
coincide with our $\ap$ implementation.

\item The number of higher signature detections also depends
on the order in which the list of possible reducers is searched through. 
We can state that in most benchmarks, again independent of the
homogeneity of the input polynomials, using the settings and heuristics of {\sc Singular}'s
internal, Gebauer-M\"oller-like Gr\"obner basis algorithm {\tt groebner} is 
a good choice. Of course there are examples like {\tt Fabrice-24} where
adjusting the search by hand leads to an improvement in timings of a factor of
$10$, but in other examples exactly this choice slows down
computations by a factor of $100$ and even more. Finding good heuristics for searching 
in the set of reducers is an open problem; doing this by
increasing respectively descreasing signature is not a good choice in a wide
range of example classes.
\end{enumerate}
\end{rem}

\section{Conclusion and further research}
\label{sec:conclusion}
We have given an in-depth discussion about the behaviour of
signature-based Gr\"obner basis algorithms in the inhomogeneous case. 

Explaining, why $\ff$, as initially presented in \cite{fF52002Corrected} is
restricted to homogeneous input data, we found a solution for relaxing this
condition. Moreover, by doing this the presentation of the algorithm simplifies. 
This makes it easier for a reader without prior knowledge of
signature-based algorithms to get access to this area of Gr\"obner basis
theory.

Furthermore, we have presented for the first time the strong connection between the
signature degree and the sugar degree of the corresponding 
polynomial parts. It is a delightful discovery that signature-based algorithms 
sort the corresponding pair set in a nearly optimal order from the polynomial 
point of view when assuming a degree compatible monomial ordering.
Reordering critical pairs is bounded by the 
condition of computing by increasing signatures. The question if we can find 
more efficient orderings on the signatures in these situations remains
unanswered and needs further investigation. 

Investigating the suspicion that the lost connection between polynomial degree and signature 
degree in the inhomogeneous setting can affect the sig-safe reduction process
negatively cannot be confirmed. There are specific examples where the number of
higher signature detections increase strongly in the inhomogeneous setting
(compared to the homogeneous one), but there are also
examples behaving just the other way around.

Further investigations might be done in the direction of combining
sig-safe reduction steps with the idea of self-saturation given in \cite{bcr2011}.
The overall idea of self-saturation is to use special 
kinds of reduction steps to achieve so-called {\it (weak) 
saturating remainders}. Thereby the Gr\"obner basis algorithm starts with the 
homogenized set of generators, but instead of plainly computing the 
homogeneous Gr\"obner basis of the homogenized input data, reducers 
respectively the remainders of reductions are exchanged by saturated pendants.  
The process of self-saturation has a  positive effect on Buchberger-like Gr\"obner 
basis algorithms as shown in \cite{bcr2011}. 
Being restricted to sig-safe reductions in signature-based algorithms the freedom 
of choice for the saturated elements is limited and might break its positive
effects on the computations.

%
%
\begin{acknow}
The author would like to thank the {\sc Singular} team at the University of
Kaiserslautern for their support.
Moreover, the author especially wishes to thank the anonymous referees whose comments improved the
paper.
\end{acknow}
\bibliographystyle{abbrv}

\end{document}